\documentclass{amsart}
\usepackage{graphicx}

\newtheorem{thm}{Theorem}[section]
\newtheorem*{thm*}{Theorem}

\newtheorem{lem}[thm]{Lemma}

\theoremstyle{definition}

\theoremstyle{remark}
\newtheorem{rem}[thm]{Remark}

\numberwithin{equation}{section}

\allowdisplaybreaks

\begin{document}

\title[An example of spectral phase transition phenomenon]{An example of spectral phase transition phenomenon
in a class of Jacobi matrices with periodically modulated weights}

\author{Sergey Simonov}

\address{Department of Mathematical Physics, Institute of Physics, St. Petersburg University,
Ulianovskaia 1, 198904, St. Petergoff, St. Petersburg, Russia
} \email{sergey\_simonov@mail.ru}

\subjclass{47A10, 47B36} \keywords{Jacobi matrices, Spectral phase
transition, Absolutely continuous spectrum, Pure point spectrum,
Discrete spectrum, Subordinacy theory, Asymptotics of generalized
eigenvectors}

\date{}

\begin{abstract}
We consider self-adjoint unbounded Jacobi matrices with diagonal
$q_n=n$ and weights $\lambda_n=c_n n$, where $c_n$ is a
2-periodical sequence of real numbers. The parameter space is
decomposed into several separate regions, where the spectrum is
either purely absolutely continuous or discrete. This constitutes
an example of the spectral phase transition of the first order. We
study the lines where the spectral phase transition occurs,
obtaining the following main result: either the interval
$(-\infty;\frac12)$ or the interval $(\frac12;+\infty)$ is covered
by the absolutely continuous spectrum, the remainder of the
spectrum being pure point. The proof is based on finding
asymptotics of generalized eigenvectors via the Birkhoff-Adams
Theorem. We also consider the degenerate case, which constitutes
yet another example of the spectral phase transition.
\end{abstract}

\maketitle

\section{Introduction}

In the present paper we study a class of Jacobi matrices with
unbounded entries: a linearly growing diagonal and periodically
modulated linearly growing weights.

We first define the operator $J$ on the linear set of vectors
$l_{fin}(\mathbb{N})$ having finite number of non-zero elements:
\begin{equation}
\label{maineqn}
(Ju)_n=\lambda_{n-1}u_{n-1}+q_n u_n+\lambda_{n}u_{n+1},\;n\geq2
\end{equation}
with the initial condition $(Ju)_1=q_1u_1+\lambda_1u_2$, where
$q_n=n,\;\lambda_n=c_nn$, and $c_n$ is a real 2-periodic sequence,
generated by the parameters $c_1$ and $c_2$.

Let $\{e_n\}_{n\in\mathbb{N}}$ be the canonical basis in
$l^2(\mathbb{N})$. With respect to this basis the operator $J$
admits the following matrix representation:
$$
J = \begin{pmatrix}
q_1       & \lambda_1 & 0         & \cdots \\
\lambda_1 & q_2       & \lambda_2 & \cdots \\
0         & \lambda_2 & q_3       & \cdots \\
\vdots    & \vdots    & \vdots    & \ddots
\end{pmatrix}
$$
Due to the Carleman condition \cite{Bz}
$\sum^{\infty}_{n=1}\frac1{\lambda_n}=\infty$, the operator $J$ is
essentially self-adjoint. We will therefore assume throughout the
paper, that $J$ is a closed self-adjoint operator in
$l^2(\mathbb{N})$, defined on its natural domain $D(J)=\{u\in
l^2(\mathbb{N}):Ju\in l^2(\mathbb{N})\}$.

We base our spectrum investigation on the subordinacy theory due
to Gilbert and Pearson \cite{GP}, generalized to the case of
Jacobi matrices by Khan and Pearson \cite{KP}. Using this theory,
we study an example of spectral phase transition of the first
order. This example was first obtained by Naboko and Janas in
\cite{JN1} and \cite{JN2}. In cited articles, the authors managed
to demonstrate that the space of parameters
$(c_1;c_2)\in\mathbb{R}^2$ can be naturally decomposed into a set
of regions of two types. In the regions of the first type, the
spectrum of the operator $J$ is purely absolutely continuous and
covers the real line $\mathbb{R}$, whereas in the regions of the
second type the spectrum is discrete.

Due to \cite{JN1} and \cite{JN2}, spectral properties of
Jacobi matrices of our class are determined by the location of the
point zero relative to the absolutely continuous spectrum of a
certain periodic matrix $J_{per}$, constructed based on the
modulation parameters $c_1$ and $c_2$. In our case this leads to:
\begin{equation*}
J_{per} = \begin{pmatrix}
1   & c_1 & 0   & 0   & \cdots \\
c_1 & 1   & c_2 & 0   & \cdots \\
0   & c_2 & 1   & c_1 & \cdots \\
0   & 0   & c_1 & 1   & \cdots \\
\vdots & \vdots & \vdots & \vdots & \ddots
\end{pmatrix}
\end{equation*}
Considering the characteristic polynomial
\begin{equation*}
d_{J_{per}}(\lambda)=Tr\left(
\begin{pmatrix}0 & 1 \\ -\frac{c_1}{c_2} & \frac{\lambda-1}{c_2} \end{pmatrix}
\begin{pmatrix}0 & 1 \\ -\frac{c_2}{c_1} & \frac{\lambda-1}{c_1} \end{pmatrix}
\right)=\frac{(\lambda-1)^2-c_1^2-c_2^2}{c_1c_2},
\end{equation*}
the location of the absolutely continuous spectrum
$\sigma_{ac}(J_{per})$ of $J_{per}$ can then be determined from
the following condition \cite{Bz}:
\begin{equation}                                      \label{crit}
\lambda\in\sigma_{ac}(J_{per}) \Leftrightarrow
\left|d_{J_{per}}(\lambda)\right|\leq2.
\end{equation}
This leads to the following result \cite{JN2}, concerning the
spectral structure of the operator $J$.

\vspace{7 pt} \it
If $\left|d_{J_{per}}(0)\right|<2$, then the
spectrum of the operator $J$ is purely absolutely continuous,
covering the whole real line.

If, on the other hand, $\left|d_{J_{per}}(0)\right|>2$, then the
spectrum of the operator $J$ is discrete.
\rm \vspace{7 pt}

Thus, the condition $\left|\frac{1-c_1^2-c_2^2}{c_1c_2}\right|=2$,
equivalent to $\{ \;|c_1|+|c_2|=1 \text{ or } ||c_1|-|c_2||=1 \;
\}$, determines the boundaries of the above mentioned regions on
the plane $(c_1;c_2)$ where one of the cases holds and
the spectrum of the operator $J$ is either purely
absolutely continuous or discrete (see fig. 1 on page
\pageref{fig}).

Note also, that Jacobi matrices with modulation parameters equal
to $\pm c_1$ and $\pm c_2$ are unitarily equivalent. Thus the
situation can be reduced to studying the case $c_1,\ c_2>0$.

In the present paper we attempt to study the spectral structure on
the lines, where the spectral phase transition occurs, i.e., on
the lines separating the aforementioned regions.

The paper is organized as follows.

Section 2 deals with the calculation of the asymptotics of
generalized eigenvectors of the operator $J$. This calculation is
mainly based on the Birkhoff-Adams Theorem \cite{El}. The
asymptotics are then used to characterize the spectral structure
of the operator via the Khan-Pearson Theorem \cite{KP}. It turns
out, that on the lines where the spectral phase transition occurs
the spectrum is neither purely absolutely continuous nor pure
point, but a combination of both.

In Section 3, we attempt to ascertain whether the pure point part
of the spectrum is actually discrete. In doing so, we establish a
criterion that guarantees that the operator $J$ is semibounded
from below, for all $(c_1;c_2)\in\mathbb{R}^2$. This
semiboundedness is then used in conjunction with classical methods
of operator theory to prove, that in at least one situation the
discreteness of the pure point spectrum is guaranteed.

Section 4 is dedicated to the study of the degenerate case, i.e.,
the case when one of the modulation parameters turns to zero. In
this situation, one can explicitly calculate all eigenvalues
of the operator. On this route we obtain yet another "hidden"
example of the spectral phase transition of the first order as the
point $(c_1;c_2)$ moves along one of the critical lines in the
space of parameters.

\section{Generalized eigenvectors and the spectrum of the operator J}

In this Section, we calculate asymptotics of generalized
eigenvectors of the operator $J$. Consider the recurrence relation
\cite{KP}
\begin{equation}                                                            \label{u eqn}
\lambda_{n-1}u_{n-1}+(q_n-\lambda)u_n+\lambda_{n}u_{n+1}=0,\;n\geq2.
\end{equation}
We reduce it to a form such that the Birkhoff-Adams Theorem is
applicable. To this end, we need to have a recurrence relation of
the form:
\begin{equation}                                                            \label{birkeqn}
x_{n+2}+F_1(n)x_{n+1}+F_2(n)x_n=0,\;n\geq1,
\end{equation}
where $F_1(n)$ and $F_2(n)$ admit the following asymptotical
expansions as $n\rightarrow\infty$:
\begin{equation}                                                            \label{birk as}
F_1(n)\sim \sum_{k=0}^\infty \frac{a_k}{n^k},\; F_2(n)\sim
\sum_{k=0}^\infty \frac{b_k}{n^k}
\end{equation}
with $b_0\neq 0$. Consider the characteristic equation
$\alpha^2+a_0\alpha+b_0=0$ and denote its roots $\alpha_1$ and
$\alpha_2$. Then \cite{El}:

\begin{thm*}[Birkhoff-Adams]
There exist two linearly independent solutions $x_n^{(1)}$ and
$x_n^{(2)}$ of the recurrence relation \eqref{birkeqn} with the
following asymptotics as $n\rightarrow \infty$:

1.
$$
x_n^{(i)}=\alpha_i^n n^{\beta_i}\left(1+O \left( \frac1n \right)
\right), \ i=1,2,
$$
if the roots $\alpha_1$ and $\alpha_2$ are different, where
$\beta_i=\frac{a_1 \alpha_i+b_1}{a_0 \alpha_i+2 b_0}$ ,\; i=1,2.

2.
$$
x_n^{(i)}=\alpha^n e^{\delta_i \sqrt{n}}n^{\beta} \left(1+O
\left(\frac1{\sqrt{n}} \right) \right), \ i=1,2,
$$
if the roots $\alpha_1$ and $\alpha_2$ coincide,
$\alpha:=\alpha_1=\alpha_2$, and an additional condition
$a_1\alpha+b_1\neq0$ holds, where
$\beta=\frac14+\frac{b_1}{2b_0},\;\delta_1=2\sqrt{\frac{a_0a_1-2b_1}{2b_0}}=-\delta_2$.
\end{thm*}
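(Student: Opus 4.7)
The plan is to recast the scalar recurrence \eqref{birkeqn} as a first-order vector system and apply a discrete analogue of Levinson's asymptotic integration theorem. Setting $X_n := (x_n, x_{n+1})^T$, one obtains
\begin{equation*}
X_{n+1} = A(n) X_n, \qquad A(n) = \begin{pmatrix} 0 & 1 \\ -F_2(n) & -F_1(n) \end{pmatrix},
\end{equation*}
and the assumptions \eqref{birk as} yield $A(n) = A_0 + n^{-1} A_1 + R(n)$ with $R(n) = O(n^{-2})$ and $A_0$ having characteristic polynomial $\alpha^2 + a_0 \alpha + b_0$.

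For the first case, where $\alpha_1 \neq \alpha_2$, I would diagonalize $A_0 = P D P^{-1}$ with $D = \mathrm{diag}(\alpha_1, \alpha_2)$ (using eigenvectors $(1,\alpha_i)^T$) and then perform a near-identity gauge $Y_n = (I + Q/n)^{-1} P^{-1} X_n$, choosing $Q$ so as to kill the off-diagonal part of $P^{-1} A_1 P$ modulo $O(n^{-2})$; this is possible precisely because $\alpha_1 \neq \alpha_2$. The resulting system has the form $Y_{n+1} = (\Lambda(n) + S(n)) Y_n$ with $\Lambda(n) = \mathrm{diag}(\alpha_i(1 + \beta_i/n))$, the diagonal entries of $P^{-1} A_1 P$ computing directly to the stated $\beta_i = (a_1 \alpha_i + b_1)/(a_0 \alpha_i + 2 b_0)$, and $\sum_n \|S(n)\| < \infty$. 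A Levinson-type (Benzaid--Lutz) theorem then produces two linearly independent solutions $Y_n^{(i)}$ with $Y_n^{(i)} \sim e_i \prod_{k} \alpha_i(1 + \beta_i/k)$, and since $\prod_{k=1}^{n-1}(1+\beta_i/k) = n^{\beta_i}(1 + O(n^{-1}))$, unravelling the transformation yields the claimed formula.

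For the second case, where $\alpha_1 = \alpha_2 = \alpha$, the matrix $A_0$ has a nontrivial Jordan block, so no $1/n$-scale conjugation can diagonalize the system. The natural scale of the correction becomes $1/\sqrt{n}$, and I would use a Liouville--Green ansatz $x_n = \alpha^n \exp(\delta\sqrt{n}) n^{\beta} (1 + o(1))$. Substituting, expanding $(n+k)^{1/2} = \sqrt{n} + k/(2\sqrt n) - k^2/(8 n^{3/2}) + \cdots$, and using the relations $a_0 = -2\alpha$, $b_0 = \alpha^2$ that follow from the double-root condition, the $n^{-1/2}$-order term in the recurrence yields a quadratic equation for $\delta$ whose two roots are precisely the stated $\pm \delta_1$, nonvanishing by the hypothesis $a_1 \alpha + b_1 \neq 0$; matching at order $n^{-1}$ then fixes $\beta = 1/4 + b_1/(2 b_0)$. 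Rigour is supplied by another gauge that reduces the system to $Y_{n+1} = (\Lambda(n) + S(n))Y_n$ with the two WKB exponents on the diagonal and $\sum_n \|S(n)\| < \infty$, so that the same Levinson-type theorem applies.

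The main obstacle is the equal-roots case: a single $1/n$-gauge cannot unblock the Jordan nilpotent, and one must design a transformation that simultaneously exposes the $e^{\pm \delta\sqrt n}$ asymptotics and drives the remainder into $\ell^1$. Once the ansatz is in hand the matching of coefficients is mechanical, but locating the correct $\sqrt n$ scale and rigorously controlling the $O(n^{-1/2})$ error—rather than merely verifying formal cancellation—is the delicate step and depends essentially on the non-degeneracy condition $a_1 \alpha + b_1 \neq 0$.
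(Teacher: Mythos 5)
The paper does not prove this statement at all: the Birkhoff--Adams Theorem is imported verbatim from Elaydi \cite{El} as a known classical result, so there is no internal proof to compare against. Your proposal --- rewriting \eqref{birkeqn} as $X_{n+1}=A(n)X_n$ with $A(n)=A_0+n^{-1}A_1+O(n^{-2})$ and applying discrete asymptotic integration (Benzaid--Lutz/Levinson) --- is the standard modern route to this theorem, and in the distinct-root case your sketch is essentially complete. The key computation checks out: since $a_0=-(\alpha_1+\alpha_2)$, $b_0=\alpha_1\alpha_2$, one has $a_0\alpha_i+2b_0=\alpha_i(\alpha_j-\alpha_i)\neq0$, and the diagonal entries of $P^{-1}A_1P$ are indeed $\alpha_i\beta_i$ with the stated $\beta_i$. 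Two points need to be made explicit, though: (i) the Levinson dichotomy condition must be verified also when $|\alpha_1|=|\alpha_2|$ (complex conjugate roots --- precisely the regime the paper uses for absolute continuity); it holds because the modulus of the product ratio behaves like $n^{\mathrm{Re}(\beta_1-\beta_2)}$, but it is a hypothesis, not automatic; (ii) the bare Levinson conclusion is $1+o(1)$, while the theorem asserts $1+O(1/n)$; you need the quantitative version of the theorem, in which the error is controlled by tail sums $\sum_{k\geq n}\|S(k)\|=O(1/n)$.

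In the double-root case your sketch has genuine gaps. First, the missing transformation has a name: the shear $Z_n=\mathrm{diag}(1,n^{-1/2})Y_n$, which splits the Jordan block's eigenvalues at scale $n^{-1/2}$ and leaves a remainder $O(n^{-3/2})$, which is summable; ``rigour is supplied by another gauge'' is the entire content of this case, and your proposal neither constructs the gauge nor verifies summability or dichotomy (the latter again splits into $\delta$ real versus $\delta$ purely imaginary). Second, your bookkeeping in the formal matching is off by half an order: substituting $x_n=\alpha^n e^{\delta\sqrt n}n^\beta$ into \eqref{birkeqn} and using $\alpha=-a_0/2$, the coefficient at order $n^{-1/2}$ is $\delta\alpha(\alpha+a_0/2)$, which vanishes \emph{identically} by the double-root condition, so no equation for $\delta$ appears there. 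The quadratic for $\delta$ arises at order $n^{-1}$, where one finds $\frac{\alpha^2\delta^2}{4}+a_1\alpha+b_1=0$, i.e.\ $\delta^2=-4(a_1\alpha+b_1)/b_0$, equivalent to the stated $\delta_1$; remarkably, the $\beta$-contributions cancel at this order (the terms $2\alpha^2\beta$ and $a_0\alpha\beta=-2\alpha^2\beta$ annihilate each other), so $\beta$ is determined only at order $n^{-3/2}$, with the $\frac14$ in $\beta=\frac14+\frac{b_1}{2b_0}$ coming from unwinding the shear rather than from the scalar matching alone. This degeneracy is exactly why the error in case 2 is only $O(n^{-1/2})$ and why the non-degeneracy hypothesis $a_1\alpha+b_1\neq0$ is essential; as written, your plan asserts the delicate step rather than performing it.
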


This Theorem is obviously not directly applicable in our case, due
to wrong asymptotics of coefficients at infinity. In order to deal
with this problem, we study a pair of recurrence relations,
equivalent to \eqref{u eqn}, separating odd and even components of
a vector $u$. This allows us to apply the Birkhoff-Adams Theorem
to each of the recurrence relations of the pair, which yields the
corresponding asymptotics. Combining the two asymptotics together,
we then obtain the desired result for the solution of \eqref{u
eqn}.

Denoting $ v_k:=u_{2k-1}$ and $ w_k:=u_{2k}$, we rewrite the
recurrence relation \eqref{u eqn} for the consecutive values of
$n$: $n=2k$ and $n=2k+1$.
\begin{equation*}
\lambda_{2k-1}v_k+(q_{2k}-\lambda)w_k+\lambda_{2k}v_{k+1}=0,
\end{equation*}
\begin{equation*}
\lambda_{2k}w_k+(q_{2k+1}-\lambda)v_{k+1}+\lambda_{2k+1}w_{k+1}=0.
\end{equation*}

Then we exclude $w$ in order to obtain the recurrence relation for
$v$:
$$
w_k=-\frac{\lambda_{2k-1}v_k+\lambda_{2k}v_{k+1}}{q_{2k}-\lambda},
$$
\begin{equation}                                    \label{v eqn}
v_{k+2}+P_1(k)v_{k+1}+P_2(k)v_k=0,\; k\geq1,
\end{equation}
where
$$
P_1(k)=
\frac{q_{2k+2}-\lambda}{q_{2k}-\lambda}\frac{\lambda_{2k}^2}{\lambda_{2k+1}\lambda_{2k+2}}-
\frac{(q_{2k+1}-\lambda)(q_{2k+2}-\lambda)}{\lambda_{2k+1}\lambda_{2k+2}}
+ \frac{\lambda_{2k+1}}{\lambda_{2k+2}},
$$
$$
P_2(k)= \frac{q_{2k+2}-\lambda}{q_{2k}-\lambda}
\frac{\lambda_{2k-1}\lambda_{2k}}{\lambda_{2k+1}\lambda_{2k+2}},
$$
In our case ($\lambda_n=c_n n$ and $q_n=n$) this yields the
following asymptotic expansions (cf. \eqref{birk as}) for
$P_1(k)$ and $P_2(k)$ as $k$ tends to infinity:
$$
P_1(k) = \sum_{j=0}^{\infty}{\frac{a_j}{k^j}}, \;
P_2(k) = \sum_{j=0}^{\infty}{\frac{b_j}{k^j}}
$$
with
\begin{equation}                        \label{cfs}
a_0 = \frac{c_1^2+c_2^2-1}{c_1c_2}, \; a_1 =
-\frac{c_1^2+c_2^2-2\lambda}{2c_1c_2}=-\frac{a_0}2+\frac{\lambda-\frac12}{c_1c_2},
\end{equation}
$$
b_0 = 1, \; b_1 = -1.
$$
The remaining coefficients $\{a_j\}_{j=2}^{+\infty},
\ \{b_j\}_{j=2}^{+\infty}$ can also be calculated explicitly.

On the same route one can obtain the recurrence relation for even
components $w_k$ of the vector $u$:
\begin{equation}                                        \label{w eqn}
w_{k+2}+R_1(k)w_{k+1}+R_2(k)w_k=0,\; k\geq1.
\end{equation}
Note, that if $k$ is substituted in \eqref{v eqn} by $k+\frac12$
and $v$ by $w$, the equation \eqref{v eqn} turns into \eqref{w
eqn}. Therefore,
$$
R_1(k) = P_1\left(k+\frac12\right), \;
R_2(k) = P_2\left(k+\frac12\right),
$$
and thus as $k \rightarrow \infty$,
$$
R_1(k) = a_0 + \frac{a_1}k + O\left(\frac1{k^2}\right),
$$
$$
R_2(k) = b_0 + \frac{b_1}k + O\left(\frac1{k^2}\right),
$$
with $a_0,a_1,b_0,b_1$ defined by \eqref{cfs}.

Applying now the Birkhoff-Adams Theorem we find the asymptotics of
solutions of recurrence relations \eqref{v eqn} and \eqref{w eqn}.
This leads to the following result.

\begin{lem}             \label{asympt lemma 1}
Recurrence relations \eqref{v eqn} and \eqref{w eqn} have
solutions $v_n^+$, $v_n^-$ and $w_n^+$, $w_n^-$, respectively,
with the
following asymptotics as $k\rightarrow\infty$:\\
1.
\begin{equation}
\label{no1} v_k^{\pm},\;w_k^{\pm}=\alpha_{\pm}^k
k^{\beta_{\pm}}\left(1+O\left(\frac1k\right)\right),
\end{equation}
if $\left|\frac{c_1^2+c_2^2-1}{c_1c_2}\right|\neq2$, where
$\alpha_+$ and $\alpha_-$ are the roots of the equation
$\alpha^2+a_0 \alpha+ b_0 = 0$ and
$\beta_{\pm}=\frac{a_1\alpha_{\pm}+b_1}{a_0 \alpha_{\pm}+2b_0}$
with $a_0,a_1,b_0,b_1$ defined by \eqref{cfs}.
\\Moreover, if $\left| \frac{c_1^2+c_2^2-1}{c_1c_2} \right|> 2$ then
$\alpha_{\pm}$ are real and $|\alpha_-|<1<|\alpha_+|$,
\\whereas if $\left| \frac{c_1^2+c_2^2-1}{c_1c_2} \right| < 2$ then
$\alpha_+=\overline{\alpha_-}$, $\beta_+=\overline{\beta_-}$ and
the vectors $v^+,\; v^-,\; w^+,\; w^-$ are not in $l^2(\mathbb{N})$.\\
2.
\begin{equation}
\label{no2} v_k^{\pm},\;w_k^{\pm}=\alpha^k k^{-\frac14}
e^{\delta_{\pm}\sqrt{k}}\left(1+O\left(\frac1{\sqrt{k}}\right)\right),
\end{equation}
if $\left|\frac{c_1^2+c_2^2-1}{c_1c_2}\right|=2$ and
$\lambda\neq\frac12$, where $\alpha=\alpha_+=\alpha_-$.
\\Moreover, if $ \frac{c_1^2+c_2^2-1}{c_1c_2}=2$, then
$\delta_+=2\sqrt{\frac{2\lambda-1}{2c_1c_2}}=-\delta_-$,
\\whereas if $ \frac{c_1^2+c_2^2-1}{c_1c_2}=-2$, then
$\delta_+=2\sqrt{\frac{1-2\lambda}{2c_1c_2}}=-\delta_-$.
\end{lem}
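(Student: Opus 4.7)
The plan is a direct application of the Birkhoff--Adams Theorem to each of the recurrences \eqref{v eqn} and \eqref{w eqn}. Both are already in the canonical form \eqref{birkeqn} with coefficients admitting asymptotic expansions in powers of $1/k$, and since $R_i(k)=P_i(k+\tfrac12)$ agrees with $P_i(k)$ through order $1/k$, the two recurrences share the same leading data $a_0,a_1,b_0,b_1$ from \eqref{cfs}. In particular, they share the characteristic equation $\alpha^2+a_0\alpha+1=0$ of discriminant $a_0^2-4$, and the dichotomy in the lemma corresponds exactly to the sign of $a_0^2-4$: real distinct roots when $|a_0|>2$, complex conjugate roots of modulus $1$ when $|a_0|<2$, and a double root $\alpha=-a_0/2=\mp 1$ when $a_0=\pm 2$.

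For Case~1 I invoke part~1 of Birkhoff--Adams and read off \eqref{no1} with $\beta_\pm=(a_1\alpha_\pm+b_1)/(a_0\alpha_\pm+2b_0)$. Since the product of the roots equals $b_0=1$, when $|a_0|>2$ the roots are real and distinct, so exactly one has modulus $>1$ and the other $<1$; when $|a_0|<2$ they are complex conjugate and lie on the unit circle, and since $a_0,a_1,b_0,b_1$ are real, conjugation of the defining formula yields $\beta_+=\overline{\beta_-}$. To rule out $\ell^2$-membership in the complex case, I observe that $|\alpha_\pm|=1$ forces $|v_k^\pm|,|w_k^\pm|\sim k^{\Re\beta_\pm}$, and a short computation using $\alpha_\pm^2=-a_0\alpha_\pm-1$ together with Vieta's formulas $\alpha_++\alpha_-=-a_0,\ \alpha_+\alpha_-=1$ yields the identity $\beta_++\beta_-=-1$. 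Combined with $\beta_+=\overline{\beta_-}$, this gives $\Re\beta_\pm=-\tfrac12$, so $\sum_k|v_k^\pm|^2$ behaves like $\sum_k 1/k$ and diverges.

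For Case~2 the double root is $\alpha=-a_0/2=\mp 1$, and before applying part~2 of Birkhoff--Adams I need to verify the auxiliary hypothesis $a_1\alpha+b_1\neq 0$. Substituting \eqref{cfs} and using $-a_0\alpha/2=a_0^2/4=1$, this quantity collapses to $(\lambda-\tfrac12)\alpha/(c_1c_2)$, which is nonzero precisely because $\lambda\neq\tfrac12$. The exponent then simplifies to $\beta=\tfrac14+b_1/(2b_0)=-\tfrac14$, matching the $k^{-1/4}$ in \eqref{no2}. For $\delta_1^2=2(a_0a_1-2b_1)/b_0=2a_0a_1+4$, substitution of $a_1=-a_0/2+(\lambda-\tfrac12)/(c_1c_2)$ together with $a_0^2=4$ reduces the expression to $2a_0(\lambda-\tfrac12)/(c_1c_2)$; the two sign choices $a_0=\pm 2$ reproduce exactly the formulas for $\delta_\pm$ stated in the lemma.

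The only genuinely non-mechanical step is the non-membership argument in the complex-root case; the cleanest route is the Vieta-based identity $\beta_++\beta_-=-1$ sketched above, since direct manipulation of the defining fraction for $\beta_\pm$ becomes considerably messier. Everything else --- putting the recurrences in the required form, invoking Birkhoff--Adams, and extracting $\beta$ and $\delta_\pm$ in the degenerate case --- is routine bookkeeping once the leading coefficients $a_0,a_1,b_0,b_1$ are identified as in \eqref{cfs}.
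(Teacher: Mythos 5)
Your proposal is correct and follows essentially the same route as the paper: a direct application of the Birkhoff--Adams Theorem to \eqref{v eqn} and \eqref{w eqn}, with the same discriminant case analysis, the same verification that $a_1\alpha+b_1\neq0$ reduces to $\lambda\neq\frac12$, and the same observation that $R_i(k)=P_i\left(k+\frac12\right)$ matches $P_i(k)$ in the first two orders, so the $w$-solutions share the leading asymptotics. The only (minor, equally valid) deviation is your Vieta-based identity $\beta_++\beta_-=-1$ for establishing $\mathrm{Re}\,\beta_{\pm}=-\frac12$, whereas the paper simplifies $\beta_{\pm}$ directly and notes that the correction term $\frac{2\lambda-1}{2c_1c_2}\,\mathrm{Re}\left(\frac1{a_0+2\alpha_-}\right)$ vanishes because $a_0+2\alpha_-$ is purely imaginary when the roots lie on the unit circle.
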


\begin{proof}
Consider recurrence relation \eqref{v eqn} and let the constants
$a_0, \  a_1, \  b_0, \  b_1$ be defined by \eqref{cfs}. Consider
the characteristic equation $\alpha^2+a_0\alpha+b_0=0$. It has
different roots, $\alpha_-<\alpha_+$, when the discriminant $D$
differs from zero: $D=\left(\frac{c_1^2+c_2^2-1}{c_1c_2}\right)^2-4 \neq 0$.
Note that $\alpha_+\alpha_-=1$.

Consider the case $D<0$. A direct application of the
Birkhoff-Adams Theorem yields:
$$
v_k^\pm=\alpha_{\pm}^k k^{\beta_{\pm}}
\left(1+O\left(\frac1k\right)\right), \; k \rightarrow \infty,
$$
where
$\beta_{\pm}=\frac{a_1\alpha_{\pm}+b_1}{a_0\alpha{\pm}+2b_0}$.
Then
$\alpha_+=\overline{\alpha_-}$, $|\alpha_+|=|\alpha_-|=1$ and
$\beta_+=\overline{\beta_-}$. Note also, that $v^{\pm}$ are not in
$l^2$:
$$
Re\ \beta_+=Re\ \beta_-=-\frac12+\frac{2\lambda-1}{2c_1c_2}Re\ \left(\frac1{a_0+2\alpha_-}\right)=-\frac12.
$$

In the case $D>0$, $\alpha_+$ and $\alpha_-$ are real and
$|\alpha_-|<1<|\alpha_+|$, hence $v^-$ lies in $l^2$.

Ultimately, in the case $D=0$, the roots of the characteristic equation coincide and
are equal to $\alpha=-\frac{a_0}2$, with $|\alpha|=1$, and the additional
condition $a_0a_1\neq2b_1$ is equivalent to
$$
-\frac{a_0^2}2+\frac{a_0(\lambda-\frac12)}{c_1c_2}\neq-2
\Leftrightarrow\ \lambda\neq\frac12.
$$
The Birkhoff-Adams Theorem yields:
$$
v_k^{\pm}=\alpha^k k^{\beta}e^{\delta_{\pm}\sqrt
k}\left(1+O\left(\frac1{\sqrt k}\right)\right), \; k \rightarrow
\infty,
$$
where $\beta=-\frac14, \
\delta_+=2\sqrt{\frac{a_0(\lambda-\frac12)}{2c_1c_2}}=-\delta_-$.
If the value $\delta_+$ is pure imaginary, then clearly the
vectors $v^{\pm}$ do not belong to $l^2(\mathbb{N})$.

In order to prove the assertion of the Lemma in relation to
$w^{\pm}$, note that in our calculations we use only the first two
orders of the asymptotical expansions for $P_1(k)$ and $P_2(k)$.
These coincide with the ones for $R_1(k)$ and $R_2(k)$. Thus, the
solutions of recurrence relations \eqref{v eqn} and \eqref{w eqn}
coincide in their main orders, which completes the proof.
\end{proof}

Now we are able to solve the recurrence relation \eqref{u eqn}
combining the solutions of recurrence relations \eqref{v eqn} and
\eqref{w eqn}.

\begin{lem}                                                         \label{asympt lemma 2}
Recurrence relation \eqref{u eqn} has two linearly independent
solutions $u_n^{+}$ and $u_n^{-}$ with the following asymptotics
as $k \rightarrow \infty$:\\
1.
$$ \left \{
\begin{array}{l}
u_{2k-1}^{\pm}=\alpha_{\pm}^k
k^{\beta_{\pm}}\left(1+O\left(\frac1k\right)\right),
\\
u_{2k}^{\pm}=-(c_1+\alpha_{\pm}c_2)\alpha_{\pm}^k
k^{\beta_{\pm}}\left(1+O\left(\frac1k\right)\right),
\end{array}
\right.
$$
if $\left| \frac{c_1^2+c_2^2-1}{c_1c_2} \right|\neq 2$, where the
values $\alpha_+,\; \alpha_-,\; \beta_+,\; \beta_-$ are taken from
the statement of Lemma \ref{asympt lemma 1}.\\
2.
$$ \left \{
\begin{array}{l}
u_{2k-1}^{\pm}=\alpha^k
e^{\delta_{\pm}\sqrt{k}}k^{-\frac14}\left(1+O\left(\frac1{\sqrt
k}\right)\right),
\\
u_{2k}^{\pm}=-(c_1+\alpha c_2)\alpha^k
e^{\delta_{\pm}\sqrt{k}}k^{-\frac14}\left(1+O\left(\frac1{\sqrt
k}\right)\right),
\end{array}
\right.
$$
if $\left| \frac{c_1^2+c_2^2-1}{c_1c_2} \right|=2$ and $\lambda
\neq \frac12$, where the values $\alpha,\; \delta_+,\; \delta_-$
are taken from the statement of Lemma \ref{asympt lemma 1}.
\end{lem}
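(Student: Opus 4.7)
The plan is to construct $u^{\pm}$ by taking the solutions $v^{\pm}$ of the reduced recurrence \eqref{v eqn} furnished by Lemma \ref{asympt lemma 1} as the odd-indexed entries, and then recovering the even-indexed entries via the exact coupling formula
$$ w_k = -\frac{\lambda_{2k-1} v_k + \lambda_{2k} v_{k+1}}{q_{2k}-\lambda} $$
derived earlier in this section. Concretely, I would set $u_{2k-1}^{\pm}:=v_k^{\pm}$ and $u_{2k}^{\pm}:=-(\lambda_{2k-1}v_k^{\pm}+\lambda_{2k}v_{k+1}^{\pm})/(q_{2k}-\lambda)$. By construction the $n=2k$ equations of \eqref{u eqn} hold, and the $n=2k+1$ equations hold as well, because \eqref{v eqn} was obtained by substituting exactly this expression for $w_k$ and $w_{k+1}$ into the $n=2k+1$ relation. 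Hence no separate verification that $u^{\pm}$ solves \eqref{u eqn} is needed; everything reduces to extracting the asymptotics and checking linear independence.

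For the asymptotic analysis I would simply insert the known expansions into the coupling formula, using
$$ \frac{\lambda_{2k-1}}{q_{2k}-\lambda}=c_1\,\frac{2k-1}{2k-\lambda}=c_1+O\!\left(\frac1k\right),\qquad \frac{\lambda_{2k}}{q_{2k}-\lambda}=c_2+O\!\left(\frac1k\right). $$
The remaining task is to show that $v_{k+1}^{\pm}/v_k^{\pm}=\alpha_{\pm}(1+o(1))$ to the appropriate order. In case 1 this is immediate from $((k+1)/k)^{\beta_{\pm}}=1+O(1/k)$, producing $u_{2k}^{\pm}=-(c_1+c_2\alpha_{\pm})\alpha_{\pm}^{k}k^{\beta_{\pm}}(1+O(1/k))$. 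Case 2 is the main technical point: one needs the Taylor expansion $\sqrt{k+1}=\sqrt k+\tfrac1{2\sqrt k}+O(k^{-3/2})$ to conclude $e^{\delta_{\pm}\sqrt{k+1}}=e^{\delta_{\pm}\sqrt k}\bigl(1+O(1/\sqrt k)\bigr)$, after which, together with $(k+1)^{-1/4}=k^{-1/4}(1+O(1/k))$, the stated asymptotic with the coarser error $O(1/\sqrt k)$ falls out.

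Linear independence of $u^+$ and $u^-$ is inherited from that of $v^+,v^-$: in case 1 the ratio $v_k^+/v_k^-$ either blows up (when $D>0$, since $|\alpha_+|>1>|\alpha_-|$) or oscillates with non-constant phase (when $D<0$), while in case 2 the factors $e^{\pm\delta_+\sqrt k}$ are asymptotically distinguishable since $\delta_+=-\delta_-\neq 0$ whenever $\lambda\neq\tfrac12$. As a small consistency check one should also note that the leading coefficient $-(c_1+c_2\alpha_{\pm})$ of $u_{2k}^{\pm}$ never vanishes: indeed $c_1+c_2\alpha=0$ would force $\alpha=-c_1/c_2$, and substituting this into $\alpha^2+a_0\alpha+1=0$ with $a_0=(c_1^2+c_2^2-1)/(c_1c_2)$ collapses to $1=0$. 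The only step demanding genuine care is thus the handling of $e^{\delta_{\pm}\sqrt{k+1}}$ in case 2; the rest is bookkeeping on the coupling formula.
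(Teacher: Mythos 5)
Your proposal is correct, and it takes a genuinely different --- and in some respects cleaner --- route than the paper's own proof. The paper works inside the four-dimensional space spanned by the odd/even embeddings of $v^{\pm}$ and $w^{\pm}$: it writes $u_{2k-1}=a_+v_k^++a_-v_k^-$, $u_{2k}=b_+w_k^++b_-w_k^-$, substitutes the asymptotics of Lemma \ref{asympt lemma 1} into the $n=2k$ equation of \eqref{u eqn}, and matches the leading coefficients of the growing/oscillating and constant parts to force $b_{\pm}=-(c_1+\alpha_{\pm}c_2)a_{\pm}$; this requires the separate observation that $R_i(k)=P_i\left(k+\frac12\right)$, so that $w^{\pm}$ and $v^{\pm}$ share their main-order asymptotics. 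You bypass $w^{\pm}$ and \eqref{w eqn} entirely: you rebuild the even entries exactly from $v^{\pm}$ via the elimination identity $w_k=-(\lambda_{2k-1}v_k+\lambda_{2k}v_{k+1})/(q_{2k}-\lambda)$, and your claim that the $n=2k+1$ equations then hold automatically is correct, since substituting this expression for $w_k$ and $w_{k+1}$ into the $n=2k+1$ relation and multiplying by the nonvanishing factor $-(q_{2k+2}-\lambda)/(\lambda_{2k+1}\lambda_{2k+2})$ reproduces \eqref{v eqn} verbatim. This buys exactness --- your $u^{\pm}$ solve \eqref{u eqn} identically, whereas the paper's coefficient-matching is carried out only modulo $o(1)$ terms and is logically more delicate --- and it reduces all asymptotic work to the ratio $v_{k+1}^{\pm}/v_k^{\pm}$, where your expansion $e^{\delta_{\pm}\sqrt{k+1}}=e^{\delta_{\pm}\sqrt{k}}\left(1+O\left(\frac1{\sqrt{k}}\right)\right)$ is precisely the point that needs care in case 2 (and it holds for complex $\delta_{\pm}$ as well). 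Your verification that $c_1+c_2\alpha_{\pm}\neq0$ is a worthwhile addition: the paper never checks this, although the stated asymptotics for $u_{2k}^{\pm}$ would degenerate without it, and your computation is right --- substituting $\alpha=-c_1/c_2$ into $\alpha^2+a_0\alpha+1=0$ yields $1/c_2^2=0$. Two minor caveats, both equally present in the paper: for the (at most one) index $k$ with $q_{2k}=\lambda$ the coupling formula degenerates, so strictly one constructs $u^{\pm}$ for large $k$ and extends backward through \eqref{u eqn}, which is possible since $\lambda_n\neq0$; and your linear-independence argument via non-convergence of $u_n^+/u_n^-$ is sketched rather than spelled out, but this matches the level of detail the paper itself offers ("it is clear now").
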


\begin{proof}
It is clear, that any solution of recurrence relation \eqref{u
eqn} $u$ gives two vectors, $v$ and $w$, constructed of its odd
and even components, which solve recurrence relations \eqref{v
eqn} and \eqref{w eqn}, respectively. Consequently, any solution
of the recurrence relation \eqref{u eqn} belongs to the linear
space with the basis $\{V^+, \ V^-, \ W^+, \ W^-\}$, where
$$
V_{2k-1}^{\pm}=v_k^{\pm}, \ V_{2k}^{\pm}=0
\text{ and }
W_{2k-1}^{\pm}=0, \ W_{2k}^{\pm}=w_k^{\pm}.
$$
This 4-dimensional linear space contains 2-dimensional subspace of
solutions of recurrence relation \eqref{u eqn}. In order to obtain
a solution $u$ of \eqref{u eqn}, one has to obtain two conditions
on the coefficients $a_+, \ a_-, \ b_+, \ b_-$ such that $
u=a_+V^+ + a_-V^- + b_+W^+ + b_-W^-$,
\begin{equation}                                            \label{dec}
u_{2k-1}=a_+v_k^++a_-v_k^-,\; u_{2k}=b_+w_k^++b_-w_k^-.
\end{equation}
Using Lemma \ref{asympt lemma 1}, we substitute the asymptotics of
this $u$ into \eqref{u eqn} where $n$ is taken equal to $2k$,
\begin{equation*}
\lambda_{2k-1}u_{2k-1}+(q_{2k}-\lambda)u_{2k}+\lambda_{2k}u_{2k+1}=0.
\end{equation*}
As in Lemma \ref{asympt lemma 1}, we have two distinct cases.

Consider the case
$\left|\frac{c_1^2+c_2^2-1}{c_1c_2}\right|\neq2$. Then
\begin{multline*}
\left(c_1\left[a_+\left(\frac{\alpha_+}{\alpha_-}\right)^kk^{(\beta_+-\beta-)}+a_-\right]
+\left[b_+\left(\frac{\alpha_+}{\alpha_-}\right)^kk^{(\beta_+-\beta-)}+b_-\right]+\right.
\\
+\left.c_2\left[a_+\left(\frac{\alpha_+}{\alpha_-}\right)^k\alpha_
+k^{(\beta_+-\beta-)}+a_-\alpha_-\right]\right)(1+o(1))=0 \text{
as } k\rightarrow \infty.
\end{multline*}
Therefore, for any number $k$ greater than some big enough
positive $K$ one has:
$$
[c_1a_++b_++c_2a_+\alpha_+]\left(\frac{\alpha_+}{\alpha_-}\right)^kk^{(\beta_+-\beta_-)}
+[c_1a_-+b_-+c_2a_-\alpha_-]=0,
$$
hence $b_{\pm}=-(c_1+\alpha_{\pm}c_2)a_{\pm}$. Thus \eqref{dec}
admits the following form:
$$
u_{2k-1}=a_+v_k^++a_-v_k^-,
$$
$$
u_{2k}=-(c_1+\alpha_+c_2)a_+w_k^+-(c_1+\alpha_-c_2)a_-w_k^-.
$$
It is clear now, that the vectors $u^+$ and $u^-$ defined as
follows:
$$
u_{2k-1}^+=v_k^+,\; u_{2k}^+=-(c_1+\alpha_+c_2)w_k^+,
$$
$$
u_{2k-1}^-=v_k^-,\; u_{2k}^-=-(c_1+\alpha_-c_2)w_k^-,
$$
are two linearly independent solutions of the recurrence relation
\eqref{u eqn}.

The second case here,
$\left|\frac{c_1^2+c_2^2-1}{c_1c_2}\right|=2$, can be treated in
an absolutely analogous fashion.
\end{proof}

Due to Gilbert-Pearson-Khan subordinacy theory \cite{GP},
\cite{KP}, we are now ready to prove our main result concerning
the spectral structure of the operator $J$.

\begin{thm}                                                         \label{main theorem}
Depending on the modulation parameters $c_1$ and $c_2$, there are
four distinct cases, describing the spectral structure of the
operator $J$:\\
(a) If $\left|\frac{c_1^2+c_2^2-1}{c_1c_2}\right|<2$, the spectrum
is purely absolutely continuous
with local multiplicity one almost everywhere on $\mathbb{R}$,\\
(b) If $||c_1|-|c_2||=1$ and $c_1c_2 \neq 0$, the spectrum is purely
absolutely continuous with local multiplicity one almost
everywhere on $(-\infty;\frac12)$ and pure
point on $(\frac12;+\infty)$,\\
(c) If $|c_1|+|c_2|=1$ and $c_1c_2 \neq 0$, the spectrum is purely
absolutely continuous with local multiplicity one almost
everywhere on $(\frac12;+\infty)$ and pure
point on $(-\infty;\frac12)$,\\
(d) If $\left|\frac{c_1^2+c_2^2-1}{c_1c_2}\right|>2$, the spectrum
is pure point.
\end{thm}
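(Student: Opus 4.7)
The plan is to combine the generalized eigenvector asymptotics from Lemma~\ref{asympt lemma 2} with the Gilbert-Pearson-Khan subordinacy theorem~\cite{GP,KP}. Recall that for a Jacobi matrix on $l^2(\mathbb{N})$, this theorem identifies the support of the absolutely continuous spectrum with the set of $\lambda$ for which no solution of~\eqref{u eqn} is subordinate (i.e., the partial sums $\sum_{n=1}^N|u_n|^2$ for two linearly independent solutions grow at comparable rates), while a subordinate solution that also lies in $l^2$ forces the energy to contribute only to the pure point part of the measure. Since $e_1$ is cyclic for $J$, any absolutely continuous spectrum automatically has local multiplicity one almost everywhere. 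Using the unitary equivalence noted in the introduction I may assume $c_1,c_2>0$.

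Cases~(a) and~(d) are handled by Lemma~\ref{asympt lemma 2}(1). In~(a), with $|a_0|<2$, the roots $\alpha_\pm$ are complex conjugates on the unit circle and $\operatorname{Re}\beta_\pm=-\tfrac12$, so $|u_n^\pm|^2\sim n^{-1}$ and the partial sums $\sum_{n\le N}|u_n^\pm|^2\sim\log N$ are comparable for both solutions at every real $\lambda$; consequently no subordinate solution exists, and the spectrum is purely a.c.\ on $\mathbb{R}$. In~(d), with $|a_0|>2$, the roots are real with $|\alpha_-|<1<|\alpha_+|$, so $u^-$ decays exponentially, is subordinate, and lies in $l^2$ for every $\lambda$; this rules out both a.c.\ and s.c.\ spectrum, leaving only the pure point part.

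Cases~(b) and~(c) require Lemma~\ref{asympt lemma 2}(2), where $\alpha=-a_0/2=\mp 1$ and the exponents $\delta_\pm$ depend on $\lambda$. In~(b) we have $a_0=2$ with $\delta_+^2=(2\lambda-1)/(c_1c_2)$. For $\lambda<\tfrac12$ the $\delta_\pm$ are purely imaginary, both solutions oscillate with envelope $k^{-1/4}$, so $|u_n^\pm|^2\sim n^{-1/2}$ and the partial sums $\sim\sqrt{N}$ match; no subordinate solution exists, and one obtains a.c.\ spectrum on $(-\infty,\tfrac12)$. For $\lambda>\tfrac12$ one has $\delta_+>0>\delta_-$, so $u^-$ decays like $e^{\delta_-\sqrt k}k^{-1/4}$ and is in $l^2$, which by the previous discussion forces the spectrum on $(\tfrac12,+\infty)$ to be pure point. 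Case~(c), with $a_0=-2$ and $\delta_+^2=(1-2\lambda)/(c_1c_2)$, is completely analogous, with the two half-lines swapped.

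The main technical subtlety is verifying that the $O$-remainders in Lemma~\ref{asympt lemma 2} do not spoil the comparability of partial sums in the oscillatory regimes: one has to split a general solution $u=a_+u^++a_-u^-$ and control the cross terms $u_n^+\overline{u_n^-}$, a routine but slightly delicate computation using that the leading factors differ only by a bounded oscillating phase. Two minor further issues are the single point $\lambda=\tfrac12$ in~(b) and~(c), at which Birkhoff-Adams does not apply since $a_1\alpha+b_1=0$ (harmless as it has Lebesgue measure zero), and the fact that ``pure point'' here means absence of a.c.\ and s.c.\ parts rather than discreteness of the eigenvalues; the sharper question of discreteness is deferred to Section~3.
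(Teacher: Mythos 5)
Your proposal is correct and follows essentially the same route as the paper: assume $c_1,c_2>0$ by unitary equivalence, read off the subordinacy properties of $u^{\pm}$ from Lemma~\ref{asympt lemma 2} (a decaying subordinate solution in $l^2$ when $\left|\frac{c_1^2+c_2^2-1}{c_1c_2}\right|>2$, or when $\lambda$ lies on the pure-point side of the critical lines, versus two solutions of comparable size and hence no subordinate solution otherwise), and conclude via the Khan--Pearson theorem with the same case split at $\lambda=\frac12$. The only slip is immaterial: your $\delta_+^2=(2\lambda-1)/(c_1c_2)$ is off by a factor of $2$ from the paper's $\delta_+=2\sqrt{\frac{2\lambda-1}{2c_1c_2}}$, which does not affect the sign analysis, and your extra care about cross terms in the partial sums, the excluded point $\lambda=\frac12$, and the meaning of ``pure point'' are all points the paper treats as implicit.
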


The four cases described above are illustrated by fig. 1.
\unitlength=1mm

\begin{figure}[h] \label{fig}
\begin{picture}(100,100)
\put(50,0){\vector(0,1){100}} \put(0,50){\vector(1,0){100}}
\put(25,0){\line(1,1){75}} \put(0,25){\line(1,1){75}}
\put(0,75){\line(1,-1){75}} \put(25,100){\line(1,-1){75}}
\put(50,50){\llap{0}} \put(50,75){\llap{1}}
\put(50,100){\llap{$c_2$}} \put(75,50){1} \put(100,50){$c_1$}
\put(75,25){a} \put(75,75){a} \put(25,25){a} \put(25,75){a}
\put(12,37){b} \put(12,62){b} \put(37,12){b} \put(37,87){b}
\put(62,12){b} \put(62,87){b} \put(87,37){b} \put(87,62){b}
\put(37,37){c} \put(37,62){c} \put(62,37){c} \put(62,62){c}
\put(12,50){d} \put(50,12){d} \put(55,37){d} \put(55,87){d}
\put(87,50){d}
\end{picture}
\end{figure}

\begin{proof}
Without loss of generality, assume that $c_1$, $c_2>0$. Changing
the sign of $c_1$ or $c_2$ leads to an unitarily equivalent
operator.

Consider subordinacy properties of generalized eigenvectors
\cite{KP}.

If $\frac{|c_1^2+c_2^2-1|}{c_1c_2}>2$, we have
$|\alpha_-|<1<|\alpha_+|$. By Lemma \ref{asympt lemma 2}, $u_-$ is
a subordinate solution and lies in $l^2(\mathbb{N})$. Thus, every
real $\lambda$ can either be an eigenvalue or belong to the
resolvent set of the operator $J$.

If $\frac{|c_1^2+c_2^2-1|}{c_1c_2}<2$, we have $Re\ \alpha_+=Re\
\alpha_-,\ Re\ \beta_+=Re\ \beta_-,\ |u_n^+|\sim|u_n^-|$ as
$n\rightarrow\infty$, and there is no subordinate solution for all
real $\lambda$. The spectrum of $J$ in this situation is purely
absolutely continuous.

If $\frac{c_1^2+c_2^2-1}{c_1c_2}=2$, which is equivalent to
$|c_1-c_2|=1$, then either $\lambda>\frac12$ or $\lambda<\frac12$.
    If $\lambda > \frac12$, then $|\alpha|=1,\; \delta_+=-\delta_->0$ and
     $u_-$ is subordinate and lies in $l^2(\mathbb{N})$, hence $\lambda$ can
     either be an eigenvalue or belong to the resolvent set.
    If $\lambda < \frac12$, then $|\alpha|=1$, both $\delta_+$ and $\delta_-$
     are pure imaginary, $|u_n^+|\sim|u_n^-|$ as $n\rightarrow\infty$,
     no subordinate solution exists and ultimately $\lambda$ belongs to
     purely absolutely continuous spectrum.

If $\frac{c_1^2+c_2^2-1}{c_1c_2}=-2$, which is equivalent to $c_1+c_2=1$, the subcases
$\lambda>\frac12$ and $\lambda<\frac12$ change places, which
completes the proof.
\end{proof}

These results elaborate the domain structure, described in Section
1: we have obtained the information on the spectral structure of
the operator $J$ when the modulation parameters are on the
boundaries of regions.

\section{Criterion of semiboundedness and discreteness of the spectrum}
We start with the following Theorem which constitutes a criterion
of semiboundedness of the operator $J$.

\begin{thm}
Let $c_1c_2\neq0$.\\
1. If $|c_1|+|c_2|>1$, then the operator $J$ is not semibounded.\\
2. If $|c_1|+|c_2|\leq1$, then the operator $J$ is semibounded from below.
\end{thm}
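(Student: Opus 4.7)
The plan is to work directly with the quadratic form
$$\langle Ju,u\rangle = \sum_{n\geq 1} n|u_n|^2 + 2\,\mathrm{Re}\sum_{n\geq 1} c_n n\, u_n \bar u_{n+1}$$
on the core $l_{fin}(\mathbb{N})$, on which $J$ is essentially self-adjoint. By the sign-change unitary equivalence noted in Section 1 (replacing $c_j$ by $-c_j$), I may assume $c_1,c_2>0$, so that $|c_j|=c_j$ throughout.

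For part 2, the elementary bound $2|u_n\bar u_{n+1}|\leq |u_n|^2+|u_{n+1}|^2$ applied to each off-diagonal term and then reindexed yields
$$\langle Ju,u\rangle \geq (1-c_1)|u_1|^2 + \sum_{m\geq 2}\bigl(m(1-c_1-c_2)+c_{m-1}\bigr)|u_m|^2.$$
When $c_1+c_2\leq 1$ every coefficient is nonnegative, so $\langle Ju,u\rangle\geq 0$ on the core, and by essential self-adjointness $J\geq 0$.

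For part 1, I would construct a Weyl-type sequence of unit vectors on which the quadratic form diverges to $-\infty$. Fix a nontrivial $g\in C^\infty_c((1,2))$ with $\int g^2=1$, set $\phi^{(N)}_n=N^{-1/2}g(n/N)$, and put $u^{(N)}_n=(-1)^n\phi^{(N)}_n$. The alternating signs are arranged precisely so that every cross term contributes $-2c_n n\phi_n\phi_{n+1}$. Since $\phi^{(N)}$ is slowly varying on scale $N$, $\phi_n\phi_{n+1}=\phi_n^2+O(N^{-2})$; splitting the sum according to the parity of $n$ and applying summation by parts to the alternating remainder yields $\sum_n c_n n\phi_n^2 = \tfrac{c_1+c_2}{2}\sum_n n\phi_n^2 + O(1)$. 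Combined with $\sum_n n\phi_n^2\sim C_0 N$, where $C_0=\int_1^2 xg(x)^2\,dx>0$, this gives
$$\langle Ju^{(N)},u^{(N)}\rangle = (1-c_1-c_2)\,C_0 N + O(1),$$
which tends to $-\infty$ when $c_1+c_2>1$, so $J$ is not semibounded from below.

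The only mildly delicate point is the averaging step in part 1: one needs $\sum_n(-1)^n n\phi_n^2=O(1)$ uniformly in $N$, which follows immediately from a single summation by parts once one notes that $\sup_n n\phi_n^2$ and the total variation of $n\mapsto n\phi_n^2$ are both $O(1)$ by the smoothness and compact support of $g$. Part 2 is essentially algebraic.
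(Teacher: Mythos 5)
Your proof is correct, but it takes a genuinely different and more self-contained route than the paper. The paper splits the parameter plane according to the four cases of Theorem \ref{main theorem}: in case (a) it deduces non-semiboundedness from $\sigma_{ac}(J)=\mathbb{R}$, in case (b) from $\sigma_{ac}(J)=(-\infty;\frac12]$ (both resting on the subordinacy analysis of Section 2) together with $(Je_n,e_n)=q_n\to+\infty$, and in case (d) it invokes the Janas--Naboko semiboundedness criterion of \cite{JNr}, which locates the point zero relative to $\sigma(J_{per})$; only in case (c) does it argue directly with the quadratic form. Your part 2 is essentially the paper's case-(c) computation, but carried out on the whole region $|c_1|+|c_2|\leq1$ at once: the coefficient $m(1-c_1-c_2)+c_{m-1}$ is correct (by $2$-periodicity $c_m+c_{m-1}=c_1+c_2$ for $m\geq2$), and at $|c_1|+|c_2|=1$ it reproduces the paper's bound \eqref{estimate} with the same constant $\min\{|c_1|,|c_2|\}$, while simultaneously disposing of the semibounded portion of case (d) without \cite{JNr}. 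Your part 1 replaces all the spectral input by a single explicit quasimode: the alternating-sign profile $u^{(N)}_n=(-1)^nN^{-1/2}g(n/N)$ makes every off-diagonal term maximally negative, and your error accounting is sound --- $\phi_n(\phi_{n+1}-\phi_n)=O(N^{-2})$ per term over $O(N)$ terms weighted by $c_nn=O(N)$ gives $O(1)$, and the parity-averaging via $c_n=\frac{c_1+c_2}2+(-1)^n\frac{c_2-c_1}2$ with one summation by parts (the sequence $n\phi_n^2$ has $O(1)$ sup and total variation) gives $\langle Ju^{(N)},u^{(N)}\rangle=(1-c_1-c_2)C_0N+O(1)$ on finitely supported, asymptotically unit vectors, so the form is unbounded below on the core when $c_1+c_2>1$. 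What your approach buys is independence from the Birkhoff--Adams asymptotics and from \cite{JNr}; what the paper's buys is brevity given Theorem \ref{main theorem} and a conceptual link between semiboundedness and the periodic model $J_{per}$. One small gap: the theorem asserts $J$ is \emph{not semibounded}, and you only establish unboundedness from below; you should add the (trivial) observation $(Je_n,e_n)=q_n=n\to+\infty$, exactly as the paper does in its case (b), to rule out semiboundedness from above as well.
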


\begin{proof}
Due to Theorem \ref{main theorem}, there are four distinct cases
of the spectral structure of the operator $J$, depending on the
values of parameters $c_1$ and $c_2$ (see fig. 1).

The case (a), i.e., $\left|\frac{c_1^2+c_2^2-1}{c_1c_2}\right|<2$,
is trivial, since $\sigma_{ac}(J)=\mathbb{R}$.

We are going to prove the assertion in the case (d), i.e.,
$\left|\frac{c_1^2+c_2^2-1}{c_1c_2}\right|>2$, using the result
 of Janas and Naboko \cite{JNr}. According to them, semiboundedness of the operator
$J$ depends on the location of the point zero relative to the
spectrum of the periodic operator $J_{per}$ (\cite{JNr}, see also
Section 1).

It is easy to see, that the absolutely continuous spectrum of the
operator $J_{per}$ in our case consists of two intervals,
\begin{gather*}
\sigma_{ac}(J_{per})=[\lambda_{-+};\lambda_{--}]\bigcup[\lambda_{+-};\lambda_{++}],\\
\text{where }\lambda_{\pm+}=1\pm(|c_1|+|c_2|), \ \lambda_{\pm-}=1\pm||c_1|-|c_2|| \text{ and}\\
\lambda_{-+}<\lambda_{--}<1<\lambda_{+-}<\lambda_{++}.
\end{gather*}
As it was established in \cite{JNr}, if the point zero lies in the
gap between the intervals of the absolutely continuous spectrum of
the operator $J_{per}$, then the operator $J$ is not semibounded.
If, on the other hand, the point zero lies to the left of the
spectrum of the operator $J_{per}$, then the operator $J$ is
semibounded from below. A direct application of this result
completes the proof in the case (d).

We now pass over to the cases (b) and (c), i.e.,
$\left|\frac{c_1^2+c_2^2-1}{c_1c_2}\right|=2, \ c_1c_2\neq0$. This
situation is considerably more complicated, since the point zero
lies right on the edge of the absolutely continuous spectrum of
the operator $J_{per}$. We consider the cases (b) and (c)
separately.

(b):

We have to prove, that the operator $J$ is not semibounded. By
Theorem \ref{main theorem}, $\sigma_{ac}(J)=(-\infty;\frac12]$,
thus the operator $J$ is not semibounded from below. Now consider
the quadratic form of the operator, taken on the canonical basis
element $e_n$. We have
$$
(Je_n,e_n)=q_n\rightarrow+\infty,\; n\rightarrow\infty,
$$
thus the operator $J$ is not semibounded.

(c):

We will show that the operator $J$ is semibounded from below. To
this end, we estimate its quadratic form: for any $u \in D(J)$
($D(J)$ being the domain of the operator $J$) one has
\begin{multline*}
(Ju,u)=\sum_{n=1}^{\infty}n|u_n|^2+\sum_{k=1}^{\infty}c_1(2k-1)(u_{2k-1}\overline{u_{2k}}+\\
+\overline{u_{2k-1}}u_{2k})+\sum_{k=1}^{\infty}c_2(2k)(u_{2k}\overline{u_{2k+1}}+\overline{u_{2k}}u_{2k+1}).
\end{multline*}
Using the Cauchy inequality \cite{Gl} and taking into account,
that $|c_1|+|c_2|=1$, we ultimately arrive at the estimate
\begin{multline}                                                            \label{estimate}
(Ju,u)\geq\sum_{n=1}^{\infty}n|u_n|^2
-\sum_{k=1}^{\infty}(|c_1|(2k-1)|u_{2k-1}|^2+|c_1|(2k-1)|u_{2k}|^2)-\\
-\sum_{k=1}^{\infty}(|c_2|(2k)|u_{2k}|^2+|c_2|(2k)|u_{2k+1}|^2)=\\
=\sum_{k=1}^{\infty}(|c_1||u_{2k}|^2+|c_2||u_{2k-1}|^2)\geq
\min\{|c_1|,|c_2|\}\|u\|^2>0,
\end{multline}
which completes the proof.
\end{proof}

The remainder of the present Section is devoted to the proof of
discreteness of the operator's pure point spectrum in the case (c)
of Theorem \ref{main theorem}, i.e., when $|c_1|+|c_2|=1, \
c_1c_2\neq0$.

By Theorem \ref{main theorem}, in this situation the absolutely
continuous spectrum covers the interval $[\frac12;+\infty)$ and
the remaining part of the spectrum, if it is present, is of pure
point type. The estimate \eqref{estimate} obtained in the proof of
the previous Theorem implies that there is no spectrum in the
interval $(-\infty; \min \{ |c_1|,|c_2| \}$). We will prove that
nonetheless if $|c_1|\neq|c_2|$, the pure point spectral component
of the operator $J$ is non-empty.

It is clear, that if $|c_1|=|c_2|=\frac12$, the spectrum of the
operator $J$ in the interval $(-\infty;\frac12)$ is empty and the
spectrum in the interval $(\frac12;+\infty)$ is purely absolutely
continuous. This situation together with its generalization towards Jacobi matrices with
zero row sums was considered by
Dombrowski and Pedersen in \cite{DP} and absolute continuity of the spectrum was established.

\begin{thm}                                                             \label{nonempty theorem}
In the case (c), i.e., when $|c_1|+|c_2|=1, \ c_1c_2\neq0$, under
an additional assumption $|c_1| \neq |c_2|$ the spectrum of the
operator $J$ in the interval $(-\infty;\frac12)$ is non-empty.
\end{thm}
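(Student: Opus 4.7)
The plan is to reduce the statement to a variational problem. By Theorem~\ref{main theorem}(c) the spectrum on $(-\infty,\tfrac12)$ is pure point, and the previous theorem shows that $J$ is semibounded from below in case~(c); consequently, the conclusion is equivalent to $\inf\sigma(J)<\tfrac12$, since that infimum would be attained in $\sigma(J)$ by semiboundedness and, lying below $\tfrac12$, would necessarily be an eigenvalue. By the Glazman variational principle, I therefore reduce matters to producing a single $u\in D(J)\setminus\{0\}$ with $(Ju,u)<\tfrac12\|u\|^2$.

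The central observation is that for any trial vector of the form $u_n=(-1)^n v_n$ with $v_n\ge 0$, the algebraic identity $2v_nv_{n+1}=v_n^2+v_{n+1}^2-(v_{n+1}-v_n)^2$ combined with the cancellation $c_1+c_2=1$ (which already drives the proof of \eqref{estimate}) yields the exact rewriting
\[
(Ju,u)-\tfrac12\|u\|^2=\tfrac12(c_1-c_2)\bigl(\|v_{\text{even}}\|^2-\|v_{\text{odd}}\|^2\bigr)+\sum_{n=1}^\infty c_n n(v_{n+1}-v_n)^2,
\]
where $\|v_{\text{even}}\|^2$ and $\|v_{\text{odd}}\|^2$ denote the squared $l^2$-norms of the even- and odd-indexed parts of $v$. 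Assume WLOG $c_1<c_2$. Then the first term on the right is strictly negative as soon as $v$ puts more mass on the even sublattice, while the nonnegative ``gradient'' term can be made small by choosing $v$ to vary slowly.

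I would take the parity-shifted trial
\[
v_1=0,\qquad v_n=e^{-\epsilon(n-2)^\alpha}\quad(n\ge 2),
\]
with $\alpha,\epsilon>0$ small. An Abel-summation estimate gives $\|v_{\text{even}}\|^2-\|v_{\text{odd}}\|^2=\sum_{n\ge 0}(-1)^n e^{-2\epsilon n^\alpha}\to\tfrac12$ as $\epsilon\to 0^+$ (the sign here is positive because $v_2=1$ is the largest value of $v$ and sits at an even index), whereas the integral estimate $\int_0^\infty x(v'(x))^2\,dx=\alpha/4$ yields $\sum c_n n(v_{n+1}-v_n)^2\to\alpha/8$ in the same limit; meanwhile $\|u\|^2=\|v\|^2$ diverges like $\Gamma(1/\alpha)/(\alpha(2\epsilon)^{1/\alpha})$. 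Consequently the right-hand side of the displayed identity tends to $\tfrac{c_1-c_2}{4}+\tfrac{\alpha}{8}$, which is strictly negative once one first fixes any $\alpha\in(0,2(c_2-c_1))$ and then takes $\epsilon>0$ small enough.

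The main obstacle I expect is making these two asymptotics precise uniformly as $\epsilon\to 0^+$. Both the potential part $\sum nv_n^2$ and the kinetic part $2\sum c_n n v_n v_{n+1}$ are of order $\|u\|^2\to\infty$, they cancel at leading order, and they leave only an $O(1)$ remainder whose sign decides the issue; so a genuine Abel-summation and Euler--Maclaurin analysis (rather than the heuristic integral calculations above) is needed to justify the limits. Once the argument is complete for $c_1<c_2$, the mirror construction starting the decay at $n=1$ instead of $n=2$ handles $c_1>c_2$.
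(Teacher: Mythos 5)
Your identity is correct---it is exactly the paper's rewriting of the quadratic form of $J-\frac12I$ (the computation preceding Lemma \ref{ineq lemma}) specialized to alternating vectors $u_n=(-1)^nv_n$---and the variational reduction is sound; your mirror construction for $c_1>c_2$ also works, because there the profile starts at full height at the odd site $n=1$ and no jump is needed. But in the case you actually work out, $c_1<c_2$, there is a genuine error: your evaluation of the Dirichlet term omits its $n=1$ summand. With $v_1=0$ and $v_2=1$ that summand is $c_1\cdot1\cdot(v_2-v_1)^2=c_1$, a fixed positive constant surviving the limit $\epsilon\to0^+$, so in fact $\sum_n c_n n(v_{n+1}-v_n)^2\to c_1+\frac{\alpha}{8}$, not $\frac{\alpha}{8}$, and your limiting value is $\frac{c_1-c_2}{4}+c_1+\frac{\alpha}{8}=\frac{6c_1-1}{4}+\frac{\alpha}{8}$. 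This is nonnegative whenever $c_1\geq\frac16$, so the construction proves nothing on the range $\frac16\leq c_1<\frac12$ (e.g.\ $c_1=0.3$, $c_2=0.7$ gives limit $0.2+\frac{\alpha}{8}>0$); and since no symmetry of $J$ exchanges $c_1$ and $c_2$ (the matrix begins with the weight $c_1$), this half of the theorem cannot be delegated to the mirror case.

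Moreover the defect is structural, not a matter of sharper Abel summation. For an alternating trial the imbalance $\|v_{\text{even}}\|^2-\|v_{\text{odd}}\|^2$ of a slowly varying profile is itself a boundary effect, of size about $\frac12$ times the squared jump: if you smooth the jump into a slow ramp starting from $v_1=0$, the alternating mass sum tends to $0$, so the gain disappears together with the cost and only the positive bulk term $\frac{\alpha}{8}$ remains. Even tuning the first amplitude, $v_1=s$, the optimal choice $s=2c_1$ improves the boundary bracket only to $(1-2c_1)\left(c_1-\frac14\right)$, i.e.\ the trial works only for $c_1<\frac14$. The paper escapes this by changing the shape, not the amplitude, of the trial: it pairs components, $u_{2k-1}=w_k$, $u_{2k}=-w_k$, with only $u_1=tw_1$ free. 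Then every $c_1$-bond carries zero Dirichlet cost, the mass term telescopes to a single boundary contribution, and optimizing $t=2c_1$ (note this is the same stationary value you would find) leaves the gain $\frac{(c_1-c_2)^2}{2}w_1^2$ of \eqref{ineq 3} against a cost $c_2\sum_k 2k(w_{k+1}-w_k)^2$ supported only on $c_2$-bonds; a slowly varying profile then finishes---the paper's harmonic tail $w_n^{(N)}=\sum_{k=n}^N\frac1k$, with cost $O(\ln N)$ against a gain of order $(\ln N)^2$, or equally your stretched exponential, with cost $\to c_2\frac{\alpha}{2}$ against a fixed positive gain, taking $\alpha<(c_2-c_1)^2/c_2$. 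With this paired trial your scheme closes for all $c_1\neq c_2$; as written, it does not.
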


\begin{proof}
Without loss of generality, assume that $0<c_1, \ c_2<1$. Changing
the sign of $c_1$, $c_2$ or both leads to an unitarily equivalent
operator.

Consider the quadratic form of the operator $J-\frac12I$ for $u \in D(J)$.
\begin{multline*}
\left(\left(J-\frac12I\right)u,u\right)
=\sum^{\infty}_{n=1}\left[q_n|u_n|^2+\lambda_n(u_n\overline{u_{n+1}}+\overline{u_n}u_{n+1})-\frac12|u_n|^2\right]=  \\
=\sum^{\infty}_{n=1}\left[n|u_n|^2+c_n n(|u_{n+1}+u_n|^2-|u_{n+1}|^2-|u_n|^2)-\frac12|u_n|^2\right].
\end{multline*}
Shifting the index $n$ by $1$ in the term $c_n n |u_{n+1}|^2$ and
then using the 2-periodicity of the sequence $\{c_n\}$, we have
\begin{multline*}
\left(\left(J-\frac12I\right)u,u\right)=\\
=\sum^{\infty}_{n=1}[c_nn|u_{n+1}+u_n|^2]+
\sum^{\infty}_{n=1}\left[n|u_n|^2-c_n n|u_n|^2-c_{n+1}(n-1)|u_n|^2-\frac12|u_n|^2\right]=  \\
=\sum^{\infty}_{n=1}[c_n n|u_{n+1}+u_n|^2]+\sum_{n=1}^{\infty}\left(\frac{c_{n+1}-c_n}2\right)|u_n|^2=  \\
=\sum_{k=1}^{\infty}[c_1(2k-1)|u_{2k-1}+u_{2k}|^2+c_2(2k)|u_{2k}+u_{2k+1}|^2]-\\
-\left(\frac{c_1-c_2}2\right)\sum_{k=1}^{\infty}[|u_{2k-1}|^2-|u_{2k}|^2].
\end{multline*}
We need to find a vector $u \in D(J)$ which makes this expression
negative. The following Lemma gives a positive answer to this
problem via an explicit construction and thus completes the proof.

\begin{lem}                                             \label{ineq lemma}
For $0<c_1, \ c_2 <1,\  c_1+c_2=1$ there exists a vector $u \in
l_{fin}(\mathbb{N})$ such that
\begin{multline}            \label{inequality}
\sum_{k=1}^{\infty}[c_1(2k-1)|u_{2k-1}+u_{2k}|^2+c_2(2k)|u_{2k}+u_{2k+1}|^2]<\\
<\left(\frac{c_1-c_2}2\right)\sum_{k=1}^{\infty}[|u_{2k-1}|^2-|u_{2k}|^2]
\end{multline}
\end{lem}

\begin{proof}
We consider the cases $c_1>c_2$ and $c_1<c_2$ separately.
Below we will see, that the latter can be reduced to the former.

1. $c_1-c_2>0$.

In this case, we will choose a vector $v$ from
$l_{fin}(\mathbb{N})$ with nonnegative components such that if the
vector $u$ is defined by $u_{2k-1}=v_k, \ u_{2k}=-v_{k+1}$, the
condition \eqref{inequality} holds true. In terms of such $v$, the
named condition admits the following form:
\begin{equation}            \label{ineq 1}
c_1\sum_{k=1}^{\infty}[(2k-1)(v_k-v_{k+1})^2]<\left(\frac{c_1-c_2}2\right)v_1^2
\end{equation}

2. $c_1-c_2<0$.

In this case, we will choose a vector $w \in l_{fin}(\mathbb{N})$
with nonnegative components and the value $t$ such that if the
vector $u$ is defined by $u_{2k}=-w_k, \ u_{2k-1}=w_k, \
u_1=tw_1$, the condition \eqref{inequality} holds true. In terms
of $w$ and $t$, condition \eqref{inequality} admits the form
\begin{equation}            \label{ineq 2}
c_2\sum_{k=1}^{\infty}[(2k)(w_k-w_{k+1})^2]<\left(-\frac{t^2}2+2c_1t+\frac{1-4c_1}2\right)w_1^2
\end{equation}
Take $t$ such that the expression in the brackets on the
right-hand side of the latter inequality is positive. This choice
is possible, if we take take the maximum of the parabola
$y(t)=-\frac{t^2}2+2c_1t+\frac{1-4c_1}2$, located at the point
$t_0=2c_1$. Then the inequality \eqref{ineq 2} admits the form
\begin{equation}            \label{ineq 3}
c_2\sum_{k=1}^{\infty}[(2k)(w_k-w_{k+1})^2]<\frac{(c_1-c_2)^2}2w_1^2
\end{equation}

We will now explicitly construct a vector $v^{(N)} \in
l_{fin}(\mathbb{N})$ such that it satisfies both \eqref{ineq 1}
and \eqref{ineq 3} for sufficiently large numbers of $N$. Consider
the sequence $v^{(N)}_{n}=\sum_{k=n}^{N}\frac1k$ for $n \leq N$ and put
$v^{(N)}_n=0$ for $n>N$. It is clear, that as $N \rightarrow +\infty$,
$$
(v^{(N)}_1)^2=\left(\sum_{k=1}^N \frac1k\right)^2\sim (\ln{N})^2,
$$
and
$$ \sum_{k=1}^{\infty}[(2k-1)(v^{(N)}_k-v^{(N)}_{k+1})^2] \sim
\sum_{k=1}^{\infty}[(2k)(v^{(N)}_k-v^{(N)}_{k+1})^2] \sim 2
\ln{N}=o\left((v^{(N)}_1)^2\right),
$$
which completes the proof of Lemma \ref{ineq lemma}.
\end{proof}

\end{proof}

We are now able to prove the discreteness of the pure point
spectral component of the operator $J$ in the case (c) of Theorem
\ref{main theorem}, which is non-empty due to Theorem \ref{nonempty theorem}.

\begin{thm}
In the case (c), i.e., when $|c_1|+|c_2|=1$ and $c_1c_2\neq0$,
under an additional assumption $|c_1| \neq |c_2|$ the spectrum of
the operator $J$ in the interval \\ $(-\infty;\min \{ |c_1|,|c_2|
\})$ is empty, the spectrum in the interval $[\min \{ |c_1|,|c_2|
\};\frac12)$ is discrete, and the following estimate holds for the
number of eigenvalues $\lambda_n$ in the interval
$(-\infty;\frac12-\varepsilon),\; \varepsilon>0$:
$$
\#\{\lambda_n:\;\lambda_n<\frac12-\varepsilon\}\leq\frac1{\varepsilon}.
$$
\end{thm}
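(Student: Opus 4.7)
The theorem has three distinct assertions, which I would establish in sequence.

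The first assertion---emptiness of $\sigma(J)$ on $(-\infty,\min\{|c_1|,|c_2|\})$---is immediate from the estimate \eqref{estimate} already derived in the proof of the preceding theorem on semiboundedness, which gives $(Ju,u)\geq\min\{|c_1|,|c_2|\}\|u\|^{2}$ for every $u\in D(J)$. Self-adjointness of $J$ then forces $\sigma(J)\subseteq[\min\{|c_1|,|c_2|\},+\infty)$.

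For the remaining two assertions, I would work from the quadratic form identity established inside the proof of Theorem~\ref{nonempty theorem}. Assuming $0<c_2<c_1$ with $c_1+c_2=1$ (the sign choice is taken without loss of generality), that identity reads
\[
\bigl((J-\tfrac12 I)u,u\bigr)=X(u)-\tfrac{c_1-c_2}{2}\bigl(S_{\mathrm{odd}}(u)-S_{\mathrm{even}}(u)\bigr),
\]
with $X(u):=\sum_{n\geq 1}c_n n\,|u_n+u_{n+1}|^{2}\geq 0$, $S_{\mathrm{odd}}(u)=\sum_k|u_{2k-1}|^{2}$, $S_{\mathrm{even}}(u)=\sum_k|u_{2k}|^{2}$. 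By Glazman's min-max lemma, the count $N(\tfrac12-\varepsilon;J):=\#\{\lambda_n<\tfrac12-\varepsilon\}$ equals the supremum of $\dim L$ over subspaces $L\subseteq D(J)$ on which $(Ju,u)<(\tfrac12-\varepsilon)\|u\|^{2}$. On any such $L$, every nonzero $u$ satisfies $X(u)+\varepsilon\|u\|^{2}<\tfrac{c_1-c_2}{2}(S_{\mathrm{odd}}(u)-S_{\mathrm{even}}(u))$. Using the polarization identity $|a|^{2}-|b|^{2}=\mathrm{Re}[(a+b)\overline{(a-b)}]$ together with Cauchy-Schwarz to control $|S_{\mathrm{odd}}(u)-S_{\mathrm{even}}(u)|\leq\sqrt{2X(u)/c_1}\,\|u\|$, I would reduce the variational condition to a quadratic inequality in $\sqrt{X(u)}$. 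Summing this constraint over an orthonormal basis of $L$ and reinterpreting $\sum_i(S_{\mathrm{odd}}(f_i)-S_{\mathrm{even}}(f_i))$ as the trace $\mathrm{Tr}((P_{\mathrm{odd}}-P_{\mathrm{even}})P_L)$ against the rank-$\dim L$ orthogonal projection onto $L$ should yield the bound $\dim L\leq 1/\varepsilon$. Discreteness of the spectrum in $[\min\{|c_1|,|c_2|\},\tfrac12)$ is then a direct corollary: finiteness of $N(\tfrac12-\varepsilon;J)$ for every $\varepsilon>0$ confines eigenvalues in that interval to be isolated and of finite multiplicity, with only $\tfrac12$ as a possible accumulation point.

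The main technical obstacle is extracting the sharp constant $1$ (rather than a $c_1,c_2$-dependent constant) in the bound $N\varepsilon\leq 1$. A straightforward application of Cauchy-Schwarz together with the trivial estimate $\|P_{\mathrm{odd}}-P_{\mathrm{even}}\|=1$ only yields the weaker qualitative statement $\varepsilon<(c_1-c_2)/2$ as a necessary condition for eigenvalues to exist, and does not by itself control their number. The sharper bound $N\leq 1/\varepsilon$ requires a careful balancing of the positive ``spreading" term $X(u)$ against the antisymmetric bias $S_{\mathrm{odd}}(u)-S_{\mathrm{even}}(u)$---essentially a Lieb--Thirring-type trace inequality adapted to the specific Jacobi structure of $J$, where the absolute constant $1$ emerges from optimizing the interaction between the weights $c_n n$ in $X(u)$ and the operator-norm-one parity difference $P_{\mathrm{odd}}-P_{\mathrm{even}}$.
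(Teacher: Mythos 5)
Your first assertion is fine: the bound $(Ju,u)\geq\min\{|c_1|,|c_2|\}\,\|u\|^2$ from \eqref{estimate} does place $\sigma(J)$ in $[\min\{|c_1|,|c_2|\};+\infty)$, and this is exactly how the paper disposes of that part. The problem is the counting step, which is the actual content of the theorem, and there your argument has a genuine gap. Summing the per-vector constraint $X(f_i)+\varepsilon\leq\frac{c_1-c_2}{2}\left(S_{\mathrm{odd}}(f_i)-S_{\mathrm{even}}(f_i)\right)$ over an orthonormal basis of $L$ gives
$$
\sum_i X(f_i)+\varepsilon\dim L\;\leq\;\frac{c_1-c_2}{2}\,\mathrm{Tr}\bigl((P_{\mathrm{odd}}-P_{\mathrm{even}})P_L\bigr)\;\leq\;\frac{c_1-c_2}{2}\,\dim L,
$$
since $\|P_{\mathrm{odd}}-P_{\mathrm{even}}\|=1$; the factor $\dim L$ cancels on both sides and you are left with $\varepsilon\leq\frac{c_1-c_2}{2}$, a condition independent of $\dim L$. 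The same happens with your per-vector quadratic inequality in $\sqrt{X(u)}$: its discriminant condition yields a threshold on $\varepsilon$ (a necessary condition for any eigenvalue below $\frac12-\varepsilon$ to exist), not a bound on how many orthonormal vectors can satisfy it. You acknowledge this yourself and defer to a ``Lieb--Thirring-type trace inequality'' that would balance $X(u)$ against the parity bias with absolute constant $1$ --- but that inequality is precisely the theorem to be proved, and nothing in your sketch indicates how to obtain it; as written, neither the discreteness on $[\min\{|c_1|,|c_2|\};\frac12)$ nor the estimate $\#\{\lambda_n<\frac12-\varepsilon\}\leq\frac1{\varepsilon}$ follows.

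For contrast, the paper's route avoids any trace inequality and is entirely elementary: by the Glazman lemma it suffices to exhibit \emph{one} concrete subspace of codimension $N(\varepsilon)$ on which $(Ju,u)\geq(\frac12-\varepsilon)\|u\|^2$, and the paper takes the tail subspace $l^2_N=\{u:u_1=\cdots=u_N=0\}$. The form is then bounded below using the weighted Cauchy inequality $2\,Re\,(u_n\overline{u_{n+1}})\leq I_n|u_n|^2+\frac1{I_n}|u_{n+1}|^2$ with the carefully tuned weights $I_n=1-\frac{1}{2n}$ (the choice $\phi=\frac12$ symmetrizes in $c_1,c_2$ and uses $|c_1|+|c_2|=1$), which produces $(Ju,u)\geq|u_1|^2(1-\frac{|c_1|}2)+\sum_{n\geq2}(\frac12+n\theta_n)|u_n|^2$ with the explicit remainder estimate $n|\theta_n|<\frac1{4n}$ for $n>2$. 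Hence $n\theta_n>-\varepsilon$ for all $n>N(\varepsilon)=\frac1{\varepsilon}$, and the codimension $N(\varepsilon)$ is itself the eigenvalue count bound. If you want to salvage your variational framing, you would still need such an explicit almost-diagonal lower bound on the form restricted to a finite-codimension subspace; the parity-projection bookkeeping alone cannot see the growth of the weights $c_n n$, which is what actually forces the eigenvalues to accumulate only at $\frac12$.
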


\begin{proof}
According to the Glazman Lemma \cite{Gl}, dimension of the
spectral subspace, corresponding to the interval
$(-\infty;\frac12-\varepsilon)$, is less or equal to the
co-dimension of any subspace $H_{\varepsilon}\subset
l^2(\mathbb{N})$ such that
\begin{equation}                                            \label{cnd}
(Ju,u)\geq\left(\frac12-\varepsilon\right)\|u\|^2
\end{equation}
for any $u \in D(J)\bigcap H_{\varepsilon}$. Consider subspaces
$l^2_N$ of vectors with zero first $N$ components, i.e.,
$l^2_N:=\{u\in l^2(\mathbb{N}):\;u_1=u_2=\cdots=u_N=0\}$. For any
$\varepsilon$, $0<\varepsilon<\frac12$ we will find a number
$N(\varepsilon)$ such that for any vector from
$H_{\varepsilon}=l^2_{N(\varepsilon)}$ the inequality \eqref{cnd}
is satisfied. We consider $\varepsilon$ such that
$0<\varepsilon<\frac12$ only, since the spectrum is empty in the
interval $(-\infty;0]$ (see the estimate \eqref{estimate}). The
co-dimension of the subspace $l^2_{N(\varepsilon)}$ is
$N(\varepsilon)$, so this value estimates from above the number of
eigenvalues in the interval $(-\infty;\frac12-\varepsilon)$.

Consider the quadratic form of the operator $J$ for $u \in D(J)$:
\begin{gather*}
(Ju,u)=\sum_{n=1}^{\infty}n[|u_n|^2+c_n2Re\ (u_n\overline{u_{n+1}})]\geq\\
\geq\sum_{n=1}^{\infty}n\left[|u_n|^2-|c_n|\left( I_n|u_n|^2+\frac1{I_n}|u_{n+1}|^2\right)\right]=\\
=|u_1|^2(1-|c_1| I_1)+\sum_{n=2}^{\infty}n|u_n|^2\left[1-|c_n|I_n -|c_{n-1}|\frac1{ I_{n-1}}\frac{n-1}n\right],
\end{gather*}
where we have used the Cauchy inequality \cite{Gl} $2Re\
(u_n\overline{u_{n+1}})\leq I_n|u_n|^2+\frac1{I_n}|u_{n+1}|^2$
with the sequence $I_n>0,\;n\in\mathbb{N}$ which we will fix as
$I_n:=1-\frac{\phi}n$ in order that the expression
\begin{equation}                                                                    \label{expr}
1-|c_n| I_n-|c_{n-1}|\frac1{ I_{n-1}}\left(1-\frac1n\right)
\end{equation}
takes its simplest form. This choice cancels out the first order
with respect to $n$. The value of $\phi$ in the interval $0<
\phi<1$ will be fixed later on. We have:
\begin{equation}            \label{I(n-1)^(-1)}
\frac1{ I_{n-1}}=1+\frac{ \phi}n+ \phi_n,
\end{equation}
where $\phi_n=O\left(\frac1{n^2}\right),\;n\rightarrow\infty$.
Moreover, as can be easily seen,
$$
\phi_n=\frac{ \phi( \phi+1)}{n(n-1-\phi)}.
$$
After substituting the value of $\phi_n$ into \eqref{I(n-1)^(-1)}
and then into \eqref{expr} we obtain:
\begin{equation}                                               \label{symm expr}
1-|c_n| I_n-|c_{n-1}|\frac1{ I_{n-1}}\left(1-\frac1n\right)
=\frac1n( \phi|c_n|+(1- \phi)|c_{n-1}|)+ \theta_n
\end{equation}
with $\theta_n:=|c_{n-1}|\left(\frac{ \phi}{n^2}-
\phi_n\left(1-\frac1n\right)\right)=O\left(\frac1{n^2}\right)$ as
$n \rightarrow\infty$.

Choose $\phi$ in order to make the right hand side of expression
\eqref{symm expr} symmetric with respect to the modulation parameters
$c_1$ and $c_2$: $ \phi=\frac12$. Then
$$
1-|c_n| I_n-|c_{n-1}|\frac1{
I_{n-1}}\left(1-\frac1n\right)=\frac1{2n}+ \theta_n.
$$
Consequently,
$$
(Ju,u)\geq|u_1|^2\left(1-\frac{|c_1|}2\right)+\sum_{n=2}^{\infty}\left(\frac12+ \theta_n
n\right)|u_n|^2.
$$
Since $n \theta_n\rightarrow0$ as $n\rightarrow\infty$, we can
choose $N(\varepsilon)$ such that for any $n>N(\varepsilon)$ the
condition $n \theta_n>-\varepsilon$ holds. Thus condition
\eqref{cnd} will be satisfied for all vectors from $D(J)\bigcap
l^2_{N(\varepsilon)}$, since their first components are zeros.

The discreteness of the pure point spectrum is proved. We pass on
to the proof of the estimate for $N(\varepsilon)$. We start with $
\theta_n$:
$$
| \theta_n|\leq\left|\frac{ \phi}{n^2} -\frac{
\phi(\phi+1)}{n(n-1-
\phi)}\frac{n-1}n\right|=\frac1{2n^2}\left|1-\frac32\frac{n-1}{n-\frac32}\right|.
$$
We have
$$
n>2 \Rightarrow \left\{2>\frac{n-1}{n-\frac32}>1\right\}
\Rightarrow \left\{n| \theta_n|<\frac1{4n}\right\}.
$$
Taking $N(\varepsilon)=\frac1{\varepsilon}$,
$0<\varepsilon<\frac12$, we see that for any $n>N(\varepsilon)>2$
the condition $n \theta_n>-\varepsilon$ holds. Thus, the condition
\eqref{cnd} is satisfied for all vectors from $D(J)\bigcap
l^2_{N(\varepsilon)}$, which completes the proof.
\end{proof}

\section{The degenerate case}
Now we consider the case, when one of the modulation parameters
turns to zero (we call this case degenerate). Formally speaking,
we cannot call such matrix a Jacobi one, but this limit case is of
certain interest for us, supplementing the whole picture.

\begin{thm}
If $c_1c_2=0$, $c\neq0$ (denoting
$c:=\max\{|c_1|,|c_2|\}$), then the spectrum of the operator $J$
is the closure of the set of eigenvalues $\lambda_n$:
$$
\sigma(J)=\overline{\{\lambda_n,\;n\in\mathbb{N}\}}.
$$
The set of eigenvalues is
$$
\{\lambda_n,\; n\in \mathbb{N}\}=
\left\{
\begin{array}{l}
\{\lambda_n^+,\;\lambda_n^-,\; n \in \mathbb{N} \}\text{, if }c_1\neq0,\; c_2=0\\
\{1,\; \tilde{\lambda}_n^+,\; \tilde{\lambda}_n^-,\; n \in \mathbb{N}\}\text{, if }c_1=0,\; c_2 \neq0,
\end{array}
\right.
$$
where eigenvalues $\lambda_n^{\pm},\; \tilde{\lambda}_n^{\pm}$
have the following asymptotics:
$$
\lambda_n^+,\; \tilde{\lambda}_n^+=2(1+c)n+O(1),\;
n\rightarrow\infty,
$$
$$
\lambda_n^-=2(1-c)n+\left(c-\frac12\right)-\frac1{16cn}+O\left(\frac1{n^2}\right),\;
n\rightarrow\infty
$$
$$
\tilde{\lambda}_n^-=2(1-c)n+\left(2c-\frac32\right)-\frac1{16cn}+O\left(\frac1{n^3}\right),\;
n\rightarrow\infty,
$$
\end{thm}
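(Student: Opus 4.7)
The approach is to exploit the observation that $c_1 c_2 = 0$ turns the tridiagonal matrix into a literal block-diagonal operator, reducing the spectral problem to elementary $2 \times 2$ linear algebra.

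If $c_2 = 0$ then $\lambda_{2k} = 0$ for every $k \geq 1$, so each two-dimensional subspace $H_k := \mathrm{span}(e_{2k-1}, e_{2k})$ is invariant under $J$ and one has $J = \bigoplus_{k \geq 1} J_k$ with
\[
J_k = \begin{pmatrix} 2k-1 & c_1(2k-1) \\ c_1(2k-1) & 2k \end{pmatrix}.
\]
Symmetrically, if $c_1 = 0$ then $\lambda_{2k-1} = 0$ for every $k \geq 1$, so $\mathrm{span}(e_1)$ is invariant and contributes the eigenvalue $q_1 = 1$, while on its orthogonal complement $J = \bigoplus_{k \geq 1} \tilde J_k$ with
\[
\tilde J_k = \begin{pmatrix} 2k & 2k c_2 \\ 2k c_2 & 2k+1 \end{pmatrix}.
\]
The Carleman self-adjointness recalled in the introduction, together with the standard fact that $\sigma\bigl(\bigoplus_k A_k\bigr) = \overline{\bigcup_k \sigma(A_k)}$ for a self-adjoint direct sum, then identifies $\sigma(J)$ with the closure of the union of the block spectra (with $\{1\}$ adjoined in the second case), which is exactly the first assertion of the theorem.

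Second, I would compute the eigenvalues of each block from the quadratic characteristic polynomial,
\[
\lambda_k^{\pm} = \frac{4k-1}{2} \pm \sqrt{\tfrac14 + c_1^2 (2k-1)^2}, \qquad
\tilde\lambda_k^{\pm} = \frac{4k+1}{2} \pm \sqrt{\tfrac14 + c_2^2 (2k)^2}.
\]
Writing $c$ for the nonzero parameter (so $c = \max\{|c_1|, |c_2|\}$), the elementary expansion
\[
\sqrt{\tfrac14 + c^2 m^2} = c m + \frac{1}{8 c m} + O(m^{-3}), \qquad m \to \infty,
\]
gives $\lambda_k^{+}, \tilde\lambda_k^{+} = 2(1+c)k + O(1)$ immediately. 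For the lower branches one further re-expands $(2k-1)^{-1} = (2k)^{-1}\bigl(1 + (2k)^{-1} + O(k^{-2})\bigr)$ and collects terms to order $1/k$, producing
\[
\lambda_k^{-} = 2(1-c)k + \bigl(c - \tfrac12\bigr) - \frac{1}{16 c k} + O(k^{-2}),
\]
and analogously for $\tilde\lambda_k^{-}$. The constant $2c - 3/2$ appearing in the statement of $\tilde\lambda_n^{-}$ arises after re-indexing the blocks $n \leftrightarrow k+1$, so that the eigenvalue labelling is compatible with the adjoined isolated eigenvalue $1$.

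The argument is thus essentially algebraic, and I expect the only genuine difficulty to lie in the bookkeeping of the second step: keeping track of the orders in the Taylor expansion of the square root jointly with the series expansion of $1/(2k-1)$, so that the $-1/(16cn)$ coefficient and its claimed error bound are correctly isolated, and verifying that the re-indexing between the block label $k$ and the eigenvalue label $n$ matches the natural ordering of the spectrum.
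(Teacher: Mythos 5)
Your proposal is correct and takes essentially the same route as the paper: the same block-diagonal decomposition into $2\times2$ blocks (plus the isolated $1\times1$ block giving the eigenvalue $1$ when $c_1=0$), the identity $\sigma\bigl(\bigoplus_k J_k\bigr)=\overline{\bigcup_k\sigma(J_k)}$, the quadratic formula for each block, and the Taylor expansion of the square root. One remark: the re-indexing $n\leftrightarrow k+1$ that you correctly flag produces a genuine $-\frac{1}{16cn^2}$ correction in $\tilde\lambda_n^-$, so the error there is $O(1/n^2)$ rather than the $O(1/n^3)$ claimed in the theorem statement --- the paper's own proof likewise only concludes $O(1/n^2)$, so this is a typo in the statement, not a gap in your argument.
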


\begin{proof}
When one of the parameters $c_1$ or $c_2$ is zero, the infinite
matrix consists of 2x2 (or 1x1) blocks. Thus, the operator $J$ is
an orthogonal sum of finite 2x2 (or 1x1) matrices $J_n$,
$J=\bigoplus_{n=1}^{\infty}J_n$. Then, the spectrum of the operator
$J$ is the closure of the sum of spectrums of these matrices,
$\sigma(J)=\overline{\bigcup_{n=1}^{\infty}\sigma(J_n)}$. Let us
calculate $\sigma(J_n)$.

If $c_1\neq0,\; c_2=0$, then
$$
J_n=
\begin{pmatrix}
2n-1      & c_1(2n-1) \\
c_1(2n-1) & 2n
\end{pmatrix}
$$
and $\sigma(J_n)=\{\lambda_n^+,\; \lambda_n^-\}$, where
$\lambda_n^{\pm}=\frac{4n-1\pm\sqrt{4c^2(2n-1)^2+1}}2$ and it is
easy to see that
$$
\lambda_n^{\pm}=2(1\pm c)n-\left(\frac12\pm
c\right)\pm\frac1{16cn}+ O\left(\frac1{n^2}\right),\;
n\rightarrow\infty.
$$
If $c_1=0,\; c_2\neq0$, then $J_1=1$, $\sigma(J_1)=\{1\}$,
$$
J_{n}=
\begin{pmatrix}
2n-2      & c_2(2n-2) \\
c_2(2n-2) & 2n-1 \\
\end{pmatrix}
,\ n\geq 2
$$
and $\sigma(J_{n})=\{\tilde{\lambda}_n^+,\;
\tilde{\lambda}_n^-\}$, $n\geq1$, where
$\tilde{\lambda}_n^{\pm}=\frac{4n-3\pm\sqrt{4c^2(2n-2)^2+1}}2$ and
it is easy to see that
$$
\tilde{\lambda}_n^{\pm}=2(1\pm
c)n-\left(\frac32\pm2c\right)\pm\frac1{16cn}
+O\left(\frac1{n^2}\right),\; n\rightarrow\infty,
$$
which completes the proof.
\end{proof}

\begin{rem}
From the last Theorem it follows that as $n\rightarrow \infty$,
$\lambda_n^+$, $\tilde{\lambda}_n^+ \rightarrow +\infty$.
As for $\lambda_n^-$ and $\tilde{\lambda}_n^-$, their asymptotic behavior
depends on the parameter $c$:\\
If $c>1$, then $\lambda_n^-,\ \tilde{\lambda}_n^- \rightarrow -\infty.$\\
If $c=1$, then $\lambda_n^-,\ \tilde{\lambda}_n^- \rightarrow \frac12.$\\
Finally, if $0<c<1$, then $\lambda_n^-,\;\tilde{\lambda}_n^- \rightarrow +\infty.$

Hence, if $0< c \leq 1$, the operator $J$ is semibounded from
below, and if $c>1$, the operator $J$ is not semibounded. This
clearly corresponds to results, obtained in Section 3.

When we move along the side of the boundary square (see fig. 1,
case (c)) towards one of the points $\{D_j\}_{j=1}^4=\{(1;0); \
(0;1); \ (-1;0); \ (0;-1)$\}, the absolutely continuous spectrum
covers the interval $[\frac12;+\infty)$. At the same time, at each
limit point $D_j$, $j=1,\ 2,\ 3,\ 4$, the spectrum of $J$ becomes
pure point, which demonstrates yet another phenomenon of the
spectral phase transition. Moreover, note that the spectrum at
each limit point consists of two series of eigenvalues, one going
to $+\infty$, another accumulating to the point $\lambda=\frac12$,
both points prior to the spectral phase transition having been the
boundaries of the absolutely continuous spectrum.
\end{rem}

\begin{rem}
The proof of discreteness of the spectrum in the case (c) of
Theorem \ref{main theorem} essentially involves the
semiboundedness property of the operator $J$. In the case (b) one
does not have the advantage of semiboundedness and due to that
reason the proof of discreteness supposedly becomes much more
complicated.
\end{rem}

\begin{rem}
The choice $q_n=n$ was determined by the possibility to apply the
Birkhoff-Adams technique. It should be mentioned that much more
general situation $q_n=n^{\alpha}$, $0<\alpha<1$ may be considered
on the basis of the generalized discrete Levinson Theorem. Proper
approach has been developed in \cite{JNS}, see also \cite{DN}. One
can apply similar method in our situation. Another approach which
is also valid is so-called Jordan box case and is presented in
\cite{J}.
\end{rem}

\section*{Acknowledgements} The author expresses his deep gratitude to Prof. S. N. Naboko for his constant attention to this work and for many fruitful discussions of the subject and also to
Dr. A. V. Kiselev for his help in preparation of this paper.

\end{document}